\newtheorem{lemma}{Lemma}[section]
\newtheorem{theorem}{Theorem}[section]
\theoremstyle{definition}
\newtheorem{definition}{Definition}[section]
\newtheorem{corollary}[theorem]{Corollary}
\newtheorem{remark}{Remark}
\newcommand{\comment}[1]{}
\numberwithin{equation}{section}
\begin{document}
\title{Ultracontractivity and functional inequalities on infinite graphs}
\author{Yong Lin\footnotemark[1], Shuang Liu, Hongye Song}
\date{}
\maketitle

\renewcommand{\thefootnote}{\fnsymbol{footnote}}
\footnotetext[1]{Supported by the National Natural Science Foundation of China(GrantNo.$11271011$), and supported by the Fundamental Research Funds for the Central Universities and the Research Funds of Renmin University of China($11$XNI$004$).}

\begin{center}
\textbf{Abstract}
\end{center}
In this paper, we prove the equivalent of ultracontractive bound of heat semigroup or the uniform upper bound of the heat kernel with the Nash inequality, Log-Sobolev inequalities on graphs. We also show that under the assumption of volume growth and nonnegative curvature $CDE'(n,0)$
the Sobolev inequality, Nash inequality, Faber-Krahn inequality, Log-Sobolev inequalities, discrete and continuous-time uniform upper estimate of heat kernel are all true on graphs.

\section{Introduction}
One can consider the heat equation associated with the Laplace operator $\Delta$,
\begin{equation}\label{eq:heat}
\Delta u=\partial_{t}u
\end{equation}
which leads in general to a smoothing effect in the form of ultracontractivity. This means that, if $u(t,x)$ satisfies \eqref{eq:heat}, then there exist $\gamma(t)\rightarrow0$ as $t\rightarrow\infty$, such that for any $x\in V$, and $t>0$,
\[\|u(t,x)\|\leq\gamma(t)\|u(0,x)\|.\]
One may reformulate this by saying that the semigroup $P_t=e^{t\Delta}$ satisfies the estimate
\begin{equation}\label{eq:upper}
\|P_t\|_{1\rightarrow \infty}\leq\gamma(t).
\end{equation}
It turns out that there is a strong relationship between the geometry of $\Delta$ and the smoothing effect of the associated heat equation. The connection is made through functional inequalities, namely that they may be presented equivalently (up to constants) in various forms, such as families of log-Sobolev inequalities, Sobolev equalities, Nash equalities and Faber-Krahn inequalities. And it has been recently under extensive study, such as \cite{G} on manifolds, \cite{D89} on metric spaces and \cite{C} on graphs.

Given a measurable space $(E,\mathcal{F})$ with measure $\mu$. Let $\|f\|_p$ be the norm  of $f$ in $L^p(\mu)$, $1\leq p\leq \infty$. We say that $(E,\mathcal{F},\mu)$ satisfies the Sobolev inequality with constants $A\in \mathbb{R}, C>0$ if, for all integrable functions $f$,
\begin{equation}\label{eq:Sobolev}
\|f\|_p^2\leq A\|f\|_2^2+C\int_E|\nabla f|^2d\mu.
\end{equation}
where $\nabla f$ is the gradient of $f$. In Euclidean space $\mathbb{R}^n$, the exponent $p$ in \eqref{eq:Sobolev} will often take the form $p = \frac{2D}{D-2}$ for some $D>2$. The inequality entail remarkable smoothing properties of the semigroup in the form of ultracontractivity when $p = \frac{2D}{D-2}$. This result, due to Varopoulos \cite{V}, maybe established by different methods: Carlen, Kusuoka and Stroock used Nash inequalities \cite{CKS}, while Davies and Simon \cite{DS} use Log-Sobolev inequalities. In fact in the work of many authors such as Varopoulos, Grigor'yan, Bakry-Coulhon-Ledoux-Saloff-Coste, showed that Sobolev inequalities, Nash equalities and Faber-Krahn inequalities are all equivalent on a metric space and also on graphs (see \cite{C}).

For a $n$-dimensional Riemannian manifold $(M,g)$, Bakry and Emery~\cite{BE83} use the Bochner identity as a substitute for the lower Ricci curvature bound with $K$ on spaces where a direct generalization of Ricci curvature is not available, that is, for any $f\in C^\infty(M)$,
\begin{equation} \label{eq:cd-ine}
\frac{1}{2}\Delta|\nabla f|^2\geq\langle\nabla f,\nabla\Delta f\rangle+\frac{1}{n}(\Delta f)^2+K|\nabla f|^2.
\end{equation}
On graphs, both \cite{BHLLMY} and \cite{HLLY} introduce some modified curvature conditions. It is useful to estimate heat kernel on graphs by studying some properties under curvature condition.

In this paper, on the setting of graphs, we first give those different families of inequality, show the equivalence between them (Sobolev inequalities, Nash equalities, Faber-Krahn inequalities, and especially, Log-Sobolev inequalities), we prove these
by showing they are equivalent to the ultracontractity of heat semigroup or the uniform estimate of heat kernel. Then we introduce the curvature dimension condition on graphs, and study these above inequalities and properties under nonnegative curvature and volume growth assumption.

The paper is organized as follows: in section $2$, we give basic setting and main results of this paper. In section $3$, we prove the equivalent of log-Sobolev inequalities and the ultracontractity of heat semigroup. In section $4$, we prove the the equivalent of Nash equalities and the ultracontractity of heat semegroup, then the Theorem~\ref{th:equ}. In section $5$, we prove the Theorem~\ref{th:final}.

\section{Settings and main results}
Let us now introduce the necessary definitions and notations to state the results exactly. Let $G=(V,E)$ be a infinite graph. We allow the edges on the graph to be weighted, we consider a symmetric weight function $\omega: V\times V\rightarrow [0,\infty)$, the edge $xy$ from $x$ to $y$ has weight $\omega_{xy}>0$. In this paper, we assume this weight function is symmetric($\omega_{xy}=\omega_{yx}$). And the graph we are interested is locally finite, i.e. the degree of each vertex is finite as follows
$$m(x):=\sum_{y\sim x}\omega_{xy}<\infty, \quad \mbox{for any}~x\in V.$$
We define balls $B(x,r) = \{y\in V:d(x,y)\leq r\}$, and the volume of a subset $A$ of $V$, $V(A) =\sum_{x\in A}m(x)$. We will write $V(x,r)$ for $V(B(x,r))$.

We denote by $V^{\mathbb{R}}$ the space of real functions on $V,$ by $\ell^{p}(V)=\{f \in V^{\mathbb{R}}:\sum_{x\in V}m(x)|f(x)|^{p}<\infty\}, 1\leq p< \infty$,
the space of $\ell^{p}$ integrable functions on $V$ with respect to the degree $m$. For $p=\infty$, let $\ell^{\infty}=\{f \in V^{\mathbb{R}}:\sup_{x\in V}|f(x)|<\infty\}$ be the set of bounded functions. If $p=2$, let the inner product as $\langle f,g\rangle=\sum_{x\in V}m(x)f(x)g(x)$, then the space of $\ell^{2}$ is a Hilbert space. For all $1\leq p\leq \infty$, define $\ell^p$-norm by
\[\|f\|_p=\left(\sum_{x\in V}m(x)|f(x)|^p\right)^{\frac{1}{p}}, 1\leq p<\infty ~\mbox{and}~ \|f\|_{\infty}=\sup_{x\in V}|f(x)|.\]
And we denote by $C_c(V)\subset\ell^2$ the dense subset of functions $f\in V^{\mathbb{R}}$ with finite support.

For any function $f\in V^{\mathbb{R}}$ and any $x\in V$,, let Laplacian $\Delta:V^{\mathbb{R}}\rightarrow V^{\mathbb{R}}$ on $G$ be (the normalized graph Laplacian)
$$\Delta f(x)=\frac{1}{m(x)}\sum_{y\sim x} \omega_{xy}(f(y)-f(x)).$$
And the operator $\Delta$ is associated with the semigroup $P_{t}:V^{\mathbb{R}}\rightarrow V^{\mathbb{R}}$ by, for any function $f\in C_c(V)$,
$$P_{t}f(x)=\sum_{y\in V}m(y)p(t,x,y)f(y),$$
where $p(t,x,y)$ is so-called heat kernel with continuous time on infinite graphs (see \cite{KL12}, also \cite{W}), and $P_{t}f(x)$ is a solution of the heat equation. We know the operator $P_t$ is contractive, self-adjoint, and the semigroup property holds too in $C_c(V)$. We shall keep considering the discrete-time heat kernel $p_{k}(x,y)$ on $G$ because of its probabilistic significance, which is defined by
\[\left\{
  \begin{array}{ll}
    p_{0}(x,y)=\delta_{xy},  \\
    p_{k+1}(x,z)=\sum_{y\in V}p(x,y)p_{k}(y,z),
  \end{array}
\right.\]
where $p(x,y):=\frac{\omega_{xy}}{m(x)}$ is the transition probability of the random walk on the graph, and $\delta_{xy}=1$ only when $x=y$, otherwise equals to $0$.

For any positive function $f\in V^{R^+}$, we define the gradient form and the iterated gradient form by
\[2\Gamma(f)(x)=\frac{1}{\mu(x)}\sum_{y\sim x}\omega_{xy}(f(y)-f(x))^2,\]
\[2\Gamma_{2}(f) = \Delta\Gamma(f,g)-\Gamma(f,\Delta g)-\Gamma(\Delta f,g),\]
and the modified iterated gradient form by
\[\widetilde{\Gamma_2}(f)(x)= \Gamma_2(f)(x)- \Gamma\left(f, \frac{\Gamma(f)}{f}\right)(x).\]
Then define the curvature-dimension condition $CDE'(x,n,K)$ by
$$\widetilde{\Gamma_2}(f)(x) \geq \frac{1}{n} f(x)^2\left(\Delta \log f\right)(x)^2 + K \Gamma(f)(x),$$
we say that $CDE'(n,K)$ is satisfied on graphs if $CDE'(x,n,K)$ is satisfied for all $x \in V$.

In this paper, we say the graph satisfies a polynomial volume of growth $(V)$, that is for all $x\in V, r\geq 0$, with some $D>0$,
\begin{equation}\label{eq:v}
V(x,r)\geq cr^D.\tag{V}
\end{equation}
This condition is true in some Abelian Cayley graphs which satisfy the $CDE'(n,0)$.

We also need the following assumption $\Delta(\alpha)$ on graphs with loops on all vertices.
Let $\alpha>0$, $G$ satisfies $\Delta(\alpha)$ if, for any $x,y \in V$, and $x\sim y$,
\begin{equation*}\label{eq:a}
\omega_{xy}\geq \alpha m(x).
\end{equation*}
 The $\Delta(\alpha)$ was used in \cite{D99} and
other people before. This is a mild assumption. Since for a weighted graph $G$ without loop, we can add loops on every vertices and assign new weights on edges to get a graph $G_{\alpha}$ satisfying the $\Delta(\alpha)$, and the curvature assumption $CDE'$ is stable under this construction.

For simplification, we denote $\langle f\rangle=\sum_{x\in V}f(x)$, in this paper we will consider these inequalities on graphs.
\begin{definition}
Let $D>2$, we shall consider the following properties on $G$:
\begin{description}
  \item[(LS)] (Log-Sobolev inequality) $\langle f^2\log f\rangle-\|f\|_2^2\log\|f\|_2\leq \varepsilon\langle\Gamma(f)\rangle+\beta(\varepsilon)\|f\|_2^2$, where $\beta(\varepsilon)$ be a monotonically decreasing continuous function of $\varepsilon$, for all $\varepsilon>0$, for any $f$ function with finite support on $G$;
  \item[(S)] (Sobolev inequality)  $\|f\|_{\frac{2D}{D-2}}\leq c\langle \Gamma(f)\rangle$, for any $f$ function with finite support on $G$;
  \item[(N)] (Nash inequality) $\|f\|_2^{2+\frac{4}{D}}\leq c\langle \Gamma(f)\rangle\|f\|_1^{\frac{4}{D}}$, for any $f$ function with finite support on $G$;
  \item[(FK)] (Faber-Krahn inequality) $\lambda_1(\Omega)\geq cV(\Omega)^{-\frac{2}{D}}$, for every $\Omega$ finite subset of $G$, where $\lambda_1(\Omega)=\inf\left\{\frac{\langle\Gamma(f)\rangle}{\|f\|_2^2};supp(f)\subset\Omega\right\}$;
  \item[(FK)$^*$] (Relative Faber-Krahn inequality) $\lambda_1(\Omega)\geq \frac{c}{r^2}\left(\frac{V(x,r)}{V(\Omega)}\right)^{\nu}$, for all $x\in V, r\geq \frac{1}{2}, \nu>0$, $\phi\neq\Omega\subset B(x,r)$.
\end{description}
\end{definition}
And we also study a similar upper estimate of continuous-time heat kernel $p(t,x,y)$ and discrete-time heat kernel $p_k(x,y)$ separately. In fact, $p(t,x,y)$ is not an exact analogue of $p_k(x,y)$.
\begin{definition}
Two estimates of heat kernel on $G$ as follows :
\begin{description}
  \item[(CUE)] (Continuous-time uniform upper estimate) $ \sup_{x,y\in V}p(t,x,y)\leq Ct^{-\frac{D}{2}}$
  \item[(DUE)] (Discrete-time uniform upper estimate) $ \sup_{x,y\in V}\frac{p_k(x,y)}{m(x)}\leq Ck^{-\frac{D}{2}}$
\end{description}
\end{definition}

Studying heat kernel upper bounds and above inequalities are subject of great investigations for decades. Many authors (such as Varopoulos, Grigor'yan, Coulhon-Ledoux and so on) contributed to the development of this area. For example see \cite{C} on graphs.  The $(N)$ and $(S)$ are equivalent from H\"{o}lder inequality and the truncated functions technique (see \cite{BCLS}). Moreover, $(N)$ implies $(FK)$ by H\"{o}lder inequality, conversely, it's mainly due to Grigor'yan \cite{G}. The fact that $(N)$ is equivalent to $(DUE)$ in \cite{CKS}. In this paper we will prove that $(CUE)$ is equivalent to $(N)$ and $(LS)$ separately. We summarise the above results and our conclusions as follows.
\begin{theorem}\label{th:equ}
Let $D>2$, these properties are equivalent on graphs:
\begin{enumerate}
  \item Sobolev inequality $(S)$;
  \item Nash inequality $(N)$;
  \item Faber-Krahn inequality $(FK)$;
  \item Discrete-time uniform upper estimate $(DUE)$;
  \item Continuous-time uniform upper estimate $(CUE)$;
  \item Log-Sobolev inequality $(LS)$ with $\beta(\varepsilon)=c-\frac{D}{4}\log \varepsilon$.
\end{enumerate}
\end{theorem}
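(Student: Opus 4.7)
The plan is to close a single cycle of implications among the six properties by routing everything through the ultracontractivity estimate $(CUE)$. Three of the links are already available in the literature on graphs: $(S)\Leftrightarrow(N)$ follows from H\"older's inequality combined with the truncated-function technique of Bakry--Coulhon--Ledoux--Saloff-Coste \cite{BCLS}; $(N)\Leftrightarrow(FK)$ is H\"older's inequality in one direction and Grigor'yan's argument \cite{G} in the other; and $(N)\Leftrightarrow(DUE)$ is the discrete-time Carlen--Kusuoka--Stroock theorem \cite{CKS}. It then suffices to prove the two new equivalences $(N)\Leftrightarrow(CUE)$ and $(LS)\Leftrightarrow(CUE)$, after which a chain of the form $(S)\Rightarrow(N)\Rightarrow(CUE)\Rightarrow(LS)\Rightarrow(CUE)\Rightarrow(N)\Rightarrow(FK),(DUE),(S)$ closes the loop.

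For $(N)\Rightarrow(CUE)$ I would run the classical Nash differential-inequality argument, adapted to the normalized graph Laplacian. Take $f\in C_c(V)$ with $f\geq 0$, set $u(t)=\|P_tf\|_2^2$, and use self-adjointness of $\Delta$ together with summation by parts to obtain $u'(t)=-2\langle\Gamma(P_tf)\rangle$. Plugging $P_tf$ into $(N)$ and using the $\ell^1$-contractivity $\|P_tf\|_1\leq\|f\|_1$ yields an ODE of the form $u^{1+2/D}\leq -\tfrac{c}{2}\|f\|_1^{4/D}\,u'$, whose integration gives $\|P_tf\|_2\leq Ct^{-D/4}\|f\|_1$. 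The semigroup splitting $P_t=P_{t/2}\circ P_{t/2}$ combined with self-adjointness then upgrades this to $\|P_t\|_{1\to\infty}\leq Ct^{-D/2}$, which is $(CUE)$. The reverse $(CUE)\Rightarrow(N)$ is a standard spectral/interpolation argument: writing $\|P_tf\|_2^2=\int_0^\infty e^{-2t\lambda}\,d\langle E_\lambda f,f\rangle$ one can bound this quantity in two ways (by $(CUE)$ and by $\|f\|_2^2$) and then optimize in $t$ to recover $(N)$.

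For $(LS)\Rightarrow(CUE)$ I would follow the Davies--Simon hypercontractivity scheme: choose an increasing function $p(t)$ with $p(0)=2$, set $\phi(t)=\log\|P_tf\|_{p(t)}$, differentiate in $t$, and apply $(LS)$ to $g=(P_tf)^{p(t)/2}$ with a judiciously chosen $\varepsilon(t)$ depending on $p'(t)$. With $\beta(\varepsilon)=c-\tfrac{D}{4}\log\varepsilon$, the resulting differential inequality for $\phi$ integrates to $\|P_t\|_{2\to\infty}\leq Ct^{-D/4}$, which upgrades to $(CUE)$ exactly as above. For the converse $(CUE)\Rightarrow(LS)$ the idea is to express $\langle f^2\log f\rangle-\|f\|_2^2\log\|f\|_2$ through an integral representation involving the heat semigroup and feed in the uniform bound $\|P_t\|_{1\to\infty}\leq Ct^{-D/2}$ to extract precisely the term $-\tfrac{D}{4}\log\varepsilon$ in $\beta(\varepsilon)$.

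The main obstacle I anticipate is the Davies--Simon step in the discrete setting. On a manifold one relies on the chain rule and on the Bochner-type identity $\Gamma(f^{p/2})=\tfrac{p^2}{4}f^{p-2}\Gamma(f)$ when differentiating $\|P_tf\|_{p(t)}$; neither survives on graphs. I would replace them by elementary numerical inequalities such as
\[(a^{p/2}-b^{p/2})^2\leq\frac{p^2}{4(p-1)}(a-b)(a^{p-1}-b^{p-1}),\]
applied pointwise to $a=P_tf(x)$, $b=P_tf(y)$ with $x\sim y$, so that $\langle\Gamma((P_tf)^{p/2})\rangle$ can be compared with the expression appearing when one directly differentiates $\|P_tf\|_p^p$. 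Keeping the constants sharp enough for the $-\tfrac{D}{4}\log\varepsilon$ shape of $\beta$ to survive the integration in $p(t)$ is the delicate point; once this combinatorial substitute for the chain rule is in place, the rest of the cycle is a routine assembly.
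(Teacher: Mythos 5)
Your proposal follows essentially the same route as the paper: both reduce everything to the two new equivalences $(N)\Leftrightarrow(CUE)$ and $(LS)\Leftrightarrow(CUE)$ --- proved respectively by Nash's differential-inequality argument and by the Davies--Simon scheme with exactly the numerical inequality $(a^{p/2}-b^{p/2})^2\leq\frac{p^2}{4(p-1)}(a-b)(a^{p-1}-b^{p-1})$ replacing the chain rule --- and both import $(S)\Leftrightarrow(N)\Leftrightarrow(FK)\Leftrightarrow(DUE)$ from the literature. The only point where the paper is more specific is the direction $(CUE)\Rightarrow(LS)$, where your ``integral representation'' is realized via the Stein interpolation bound $\|P_sf\|_{p(s)}\leq e^{M(t)s/t}$ with $p(s)=\frac{2t}{t-s}$, differentiated at $s=0$.
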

\begin{remark}
Note that the requirement $D>2$ is only necessary for the Sobolev inequality for ensure $\frac{2D}{D-2}>0$ in $(S)$, and not for the rest results. Actually, the proof below the ultracontractive bounds $(CUE)$ will transit through Nash inequalities $(N)$ (see Theorem \ref{th:nash}) and Log-Sobolev inequality $(LS)$ (see Theorem \ref{th:lsth}) and can be extended to any $D>0$.
\end{remark}

Another main purpose in this paper is to reveal a few of assumptions like nonnegative curvature and polynomial volume growth on graphs ensure the above properties. For any Abelian Cayley graph, the curvature-dimension condition $CDE'(n,0)$ (for example the lattice $Z^d$ with $CDE'(4.53d,0)$) and $(V)$ hold in the meantime for some appropriate constants with respect to $d$.
\begin{theorem}\label{th:final}
Let $D=D(n)>2$, assume a graph $G$ satisfies $CDE'(n,0)$, $\Delta(\alpha)$ and $(V)$, then all of these properties $(S), (N), (FK), (DUE), (CUE)$ hold with appropriate constants, and also $(LS)$ with $\beta(\varepsilon)=c(n)-\frac{D(n)}{4}\log \varepsilon$ hold.
\end{theorem}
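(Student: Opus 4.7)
The plan is to reduce everything to establishing $(CUE)$ and then invoke Theorem \ref{th:equ} to obtain the remaining five properties in a single stroke. Since Theorem \ref{th:equ} asserts the equivalence of $(S), (N), (FK), (DUE), (CUE)$, and $(LS)$ with the stated form of $\beta$, it suffices to produce a uniform on-diagonal bound $p(t,x,y) \leq C t^{-D/2}$ under the three structural hypotheses $CDE'(n,0)$, $\Delta(\alpha)$, and $(V)$.

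First I would invoke the Li-Yau type gradient estimate for positive solutions of the heat equation that was established under the condition $CDE'(n,0)$ in \cite{BHLLMY}. The role of the $\Delta(\alpha)$ assumption is to tame the non-local terms in the discrete Bochner identity so that the estimate goes through with constants depending only on $n$ and $\alpha$; equivalently, it guarantees that every vertex carries a self-loop of sufficient weight, which is what lets the maximum principle run on graphs exactly as it does on manifolds. At this point the dimension $D=D(n)$ that appears in the conclusion is essentially read off from the Li-Yau constant together with the exponent in $(V)$.

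The second step is the standard passage from a Li-Yau inequality to a parabolic Harnack inequality by integrating along space-time paths. The Harnack inequality together with the conservative estimate $\sum_{y} m(y) p(t,x,y) \leq 1$ yields the ball-averaged bound
\[
p(t,x,y) \leq \frac{C}{V(x,\sqrt{t})}
\]
for $t \geq 1$; a short-time argument, using $\Delta(\alpha)$ to get a trivial pointwise bound $p(t,x,y) \leq C/m(x)$ for $t$ bounded, handles the remaining range. Now the polynomial lower bound $V(x,r) \geq c r^D$ from $(V)$ converts the on-diagonal estimate into $p(t,x,y) \leq C t^{-D/2}$, establishing $(CUE)$.

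Applying Theorem \ref{th:equ} then transfers $(CUE)$ into the full list of properties, with the Log-Sobolev constant appearing in the precise form $\beta(\varepsilon) = c(n) - \frac{D(n)}{4}\log \varepsilon$. The main obstacle lies in the first step: checking that the Li-Yau argument of \cite{BHLLMY} is available at the exact level of generality needed here, and that the Harnack-to-kernel-estimate reduction can be carried out with constants depending only on $n$ and $\alpha$. Once these two ingredients are verified, the remainder of the proof is a formal assembly using the equivalence theorem.
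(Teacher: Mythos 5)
Your proposal is correct in outline, but it enters the equivalence class of Theorem \ref{th:equ} through a different door than the paper does. The paper never touches $(CUE)$ or the Li--Yau inequality directly: it cites two packaged results of \cite{HLLY} --- the discrete-time on-diagonal estimate $p_k(x,y)\leq c\,m(y)/V(x,\sqrt{k})$ and volume doubling $DV(C(n))$, both valid under $CDE'(n,0)$ and $\Delta(\alpha)$ --- and feeds them into a theorem of Coulhon--Grigor'yan \cite{CG} to obtain the relative Faber--Krahn inequality $(FK)^*$; the volume lower bound $(V)$ then upgrades $(FK)^*$ to $(FK)$ (this is Theorem \ref{th:cde}), and Theorem \ref{th:equ} finishes the job. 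You instead propose to run the chain Li--Yau $\Rightarrow$ parabolic Harnack $\Rightarrow$ $p(t,x,y)\leq C/V(x,\sqrt{t})$ $\Rightarrow$ $(CUE)$, and only then invoke Theorem \ref{th:equ}. This works and is arguably more direct, but the chain you describe is exactly the content of the heat kernel estimates already established in \cite{HLLY}, so you would be re-deriving an intermediate result of that paper rather than citing its endpoint; the paper's route has the advantage of being a pure assembly of citable statements. Two small corrections to your first step: the gradient estimate under the exponential curvature condition $CDE'(n,0)$ used here is due to \cite{HLLY} rather than \cite{BHLLMY} (the latter works with the unprimed condition $CDE(n,K)$); and the short-time bound does not really need $\Delta(\alpha)$, since $p(t,x,y)\leq 1/m(y)$ already follows from $\sum_{z}m(z)p(t,x,z)\leq 1$, and the uniform lower bound on $m(y)$ required to convert this into a constant is supplied by $(V)$ applied to radii $r<1$.
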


\section{Log-Sobolev inequality and ultracontractivity on graphs}
In this section we consider the relationship between Log-Sobolev inequality and ultracontractive property. We say that the operator $P_t=e^{t\Delta}$ is ultracontractive if $P_t$ is bounded from $\ell^2$ to $\ell^\infty$ for all $t\geq0$. Let $\|A\|_{p\rightarrow q}$ be the norm of an operator $A$ from $\ell^p$ to $\ell^q$, that is $\|A\|_{p\rightarrow q}:=\sup_{f\in \ell^p}\frac{\|Af\|_q}{\|f\|_p}$. We have by duality for all $t>0$,
\[\|P_{\frac{t}{2}}\|_{2 \rightarrow \infty}=\|P_{\frac{t}{2}}\|_{1 \rightarrow 2}=\|P_{t}\|_{1 \rightarrow \infty}^{\frac{1}{2}},\]
indeed, this is because the semigroup property of the operator $P_t$ with $P_\frac{t}{2}\circ P_\frac{t}{2}=P_t$, the symmetric property $P_t^*=P_t$
as well as the following well-known equality
\[\|A^*A\|_{1\rightarrow\infty}=\|A\|_{1\rightarrow 2}^2.\]
Moreover
\[\|P_{t}\|_{1 \rightarrow \infty}=\sup_{x,y\in V}p(t,x,y),\]
that is to say, to get the ultracontractive property we mentioned before is same to estimate the upper bound of heat kernel $p(t,x,y)$.

Now we introduce the similar result on graph with Davies¡¯ theorem \cite{D89}.
\begin{theorem}\label{th:lsi}
For any $f\in\ell^2$, if the ultracontractivity
$$\|P_tf\|_\infty\leq e^{M(t)}\|f\|_2$$
satisfies with $M(t)$ is a continuous and decreasing function with $t$, then the logarithmic Sobolev inequality, for any $0\leq f\in C_c(V)$
\[\langle f^2\log f\rangle\leq \varepsilon\langle\Gamma(f)\rangle+\beta(\varepsilon)\|f\|_2^2+\|f\|_2^2\log\|f\|_2\]
holds with $\beta(\varepsilon)=M(\varepsilon)$ for any $\varepsilon>0$.
\end{theorem}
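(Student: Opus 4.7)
The plan is to adapt Davies's classical argument from the manifold setting to graphs. On manifolds the proof hinges on the identity $\Delta u / u = \Delta(\log u) + |\nabla \log u|^2$; on graphs this becomes the one-sided inequality $\Delta u / u \geq \Delta(\log u) + \Gamma(\log u)$, valid for any positive $u$ and provable via the convexity estimate $e^a - 1 - a \geq a^2/2$ applied to $a = \log u(y) - \log u(x)$ on each edge. By $\ell^2$-homogeneity I first reduce to $0 \leq f \in C_c(V)$ with $\|f\|_2 = 1$, so the goal becomes
\[
\int f^2 \log f \, dm \leq \varepsilon \langle \Gamma(f)\rangle + M(\varepsilon).
\]

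The ultracontractive hypothesis supplies the endpoint for free: since $\|P_\varepsilon f\|_\infty \leq e^{M(\varepsilon)}$, taking logarithms gives $\log(P_\varepsilon f)(x) \leq M(\varepsilon)$ pointwise, and hence $\int f^2 \log(P_\varepsilon f)\, dm \leq M(\varepsilon)$. It then remains to compare $\int f^2 \log f$ with $\int f^2 \log P_\varepsilon f$, for which I would introduce the interpolating quantity
\[
G(s) := \int f^2 \log(P_s f)\, dm, \qquad s \in [0, \varepsilon],
\]
so that $G(0) = \int f^2 \log f$ and $G(\varepsilon) \leq M(\varepsilon)$. The whole problem reduces to showing $G'(s) \geq -\langle \Gamma(f)\rangle$, since integrating in $s$ then yields $G(0) \leq G(\varepsilon) + \varepsilon \langle \Gamma(f)\rangle$.

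To bound $G'(s) = \int (f^2 / P_s f)\, \Delta(P_s f)\, dm$ from below, I would apply the chain-rule inequality above with $u = P_s f$, and then invoke the graph integration-by-parts $\int \phi\,\Delta\psi\, dm = -\langle \Gamma(\phi,\psi)\rangle$, obtaining
\[
G'(s) \geq -\langle \Gamma(f^2,\log P_s f)\rangle + \int f^2 \Gamma(\log P_s f)\, dm.
\]
The closing step is an edgewise completion of the square: writing both terms as sums over unordered edges $\{x,y\}$, factoring $f(y)^2 - f(x)^2 = (f(y)-f(x))(f(y)+f(x))$, and using $f(x)^2 + f(y)^2 \geq (f(x)+f(y))^2/2$, each edge contributes at least $-\omega_{xy}(f(y)-f(x))^2$, so that $G'(s) \geq -\langle \Gamma(f)\rangle$ exactly. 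Integrating in $s$ and undoing the normalization restores the $\|f\|_2^2\log\|f\|_2$ term in the statement.

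The main obstacle is keeping the constants sharp. In Davies's manifold proof the chain-rule identity is exact; here it is a one-sided inequality, and the edgewise completion of squares must also close without slack. If either step lost a multiplicative constant $c > 1$, the conclusion would degrade from $\beta(\varepsilon) = M(\varepsilon)$ to $\beta(\varepsilon) = cM(\varepsilon)$. The convexity bound $e^a - 1 - a \geq a^2/2$ is sharp enough to preserve unit constants, and the AM-QM step $f(x)^2 + f(y)^2 \geq (f(x)+f(y))^2/2$ provides exactly the room needed to finish.
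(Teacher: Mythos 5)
Your overall strategy (interpolating via $G(s)=\sum_x m(x) f(x)^2\log P_sf(x)$ and showing $G'(s)\ge -\langle\Gamma(f)\rangle$) is a legitimate and genuinely different route from the paper, which instead differentiates $\|P_sf\|_{p(s)}^{p(s)}$ at $s=0$ along the exponent path $p(s)=\frac{2t}{t-s}$ and invokes Stein interpolation to get $\|P_sf\|_{p(s)}\le e^{M(t)s/t}$. However, your proof as written has a genuine gap: the ``chain-rule inequality'' $\Delta u/u\ \ge\ \Delta(\log u)+\Gamma(\log u)$ is false on graphs. Edgewise it amounts to $e^a-1-a\ge a^2/2$ for $a=\log u(y)-\log u(x)$, and this fails for every $a<0$: the function $g(a)=e^a-1-a-\tfrac{a^2}{2}$ satisfies $g'(a)=e^a-1-a\ge0$, so $g$ is nondecreasing and hence $g(a)<g(0)=0$ for $a<0$ (e.g.\ $a=-1$ gives $e^{-1}\approx0.368<0.5$). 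Since $\log u(y)-\log u(x)$ is negative on half of the directed edges, the pointwise lower bound on $\Delta u/u$, and with it your lower bound on $G'(s)$, does not follow.

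The gap is repairable without changing your architecture, but you must bypass the logarithm. Write
\[
G'(s)=\sum_x m(x)\,\frac{f(x)^2}{u(x)}\,\Delta u(x)
=-\sum_{\{x,y\}}\omega_{xy}\bigl(u(y)-u(x)\bigr)\Bigl(\tfrac{f(y)^2}{u(y)}-\tfrac{f(x)^2}{u(x)}\Bigr)
\]
with $u=P_sf$, and use the elementary edgewise bound
\[
\bigl(u(y)-u(x)\bigr)\Bigl(\tfrac{f(y)^2}{u(y)}-\tfrac{f(x)^2}{u(x)}\Bigr)
= f(x)^2+f(y)^2-\Bigl(f(x)^2\tfrac{u(y)}{u(x)}+f(y)^2\tfrac{u(x)}{u(y)}\Bigr)
\le f(x)^2+f(y)^2-2f(x)f(y)=(f(y)-f(x))^2,
\]
by AM--GM (using $f\ge0$, $u>0$). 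This gives $G'(s)\ge-\langle\Gamma(f)\rangle$ with the sharp constant and no detour through $\log u$; the rest of your argument (the endpoint bound $G(\varepsilon)\le M(\varepsilon)$ from ultracontractivity, integration in $s$, and undoing the normalization) then goes through. Your completion-of-squares step was in fact fine; it is only the preceding lemma that fails. Compared with the paper's proof, the repaired version of your argument avoids Stein interpolation entirely and is arguably more self-contained, though you should still say a word about positivity of $P_sf$ on the support of $f$ and about differentiating the (finitely supported) sum under the integral.
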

In fact there is a similar result in \cite{HLLY} (see Lemma 7.2), but it restrict the function field in $\ell^{\infty}(V,\mu)$. And the proof is basically the same. We simply reproduce them here for the sake of completeness.
\begin{proof}
For any $0\leq f\in C_c(V)$, since $\partial_t P_t f=\Delta P_t f$ for any $t\geq0$, and $p(s)$ is a bounded and continuous function with $s$ and its value more than or equal $1$. After simple computations, we have
\[
\partial_s\|P_sf\|_{p(s)}^{p(s)}=p'(s)\langle(P_sf)^{p(s)}\ln P_sf\rangle+p(s)\langle\Delta P_sf(P_sf)^{p(s)-1}\rangle
\]
If let $s=0$ in the above inequality, and let $p(s)=\frac{2t}{t-s}, 0\leq s<t$, then
\[\frac{d}{ds}\|P_sf\|_{p(s)}^{p(s)}\mid_{s=0}=\frac{2}{t}\langle f^2\ln f\rangle+2\langle f\Delta f\rangle.\]
We assume $\|f\|_2=1$, from the ultracontractivity and by the Stein interpolation theorem, we have
\[\|P_sf\|_{p(s)}\leq e^{\frac{M(t)s}{t}}.\]
From this point we can obtain
\[\frac{d}{ds}\|P_sf\|_{p(s)}^{p(s)}\mid_{s=0}\leq\frac{2M(t)}{t},\]
for observing $\|P_sf\|_{p(s)}^{p(s)}\mid_{s=0}=1$, $e^{\frac{M(t)sp(s)}{t}}\mid_{s=0}=1$, and
\[1\geq\lim_{s\rightarrow0^+}\frac{\|P_sf\|_{p(s)}^{p(s)}-1}{e^{\frac{M(t)sp(s)}{t}}-1}=\frac{d}{ds}\|P_sf\|_{p(s)}^{p(s)}\mid_{s=0}\frac{t}{2M(t)}.\]
Since the fact $-\langle f\Delta f\rangle=\langle\Gamma(f)\rangle$ from the symmetry of the weight of each edge, combining with the above equality, we obtain
\[\langle f^2\ln f\rangle\leq t\langle\Gamma(f)\rangle+M(t),~t>0.\]
If $\|f\|_2\neq1$, we put $f=\frac{g}{\|g\|_2}$ in the above inequality, and switch notation from $t$ to $\varepsilon$, yield the logarithmic Sobolev inequality we desire.
\end{proof}

Now we turn to the converse of the above result. First we introduce the following lemma. Similar result was proved by Varopoulos \cite{V} on smoothing setting.
\begin{lemma}\label{lem:LSI-p}
If there exists a monotonically decreasing continuous function $\beta(\varepsilon)$ such that for any $\varepsilon>0$ and $0\leq f\in C_c(V)$,
\begin{equation}\label{eq:LSI-2}
\langle f^2\log f\rangle\leq\varepsilon\langle\Gamma(f)\rangle+\beta(\varepsilon)\|f\|_2^2+\|f\|_2^2\log\|f\|_2.
\end{equation}
Then for all $2<p<\infty$,
\[\langle f^p\log f\rangle\leq\varepsilon\langle\Gamma(f^{p-1},f)\rangle+\frac{2\beta(\varepsilon)}{p}\|f\|_p^p+\|f\|_p^p\log\|f\|_p.\]
\end{lemma}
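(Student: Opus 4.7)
The standard strategy is to apply the hypothesis \eqref{eq:LSI-2} to the function $g = f^{p/2}$ (which is non-negative and has finite support whenever $f$ does) and then rewrite the resulting $\Gamma(f^{p/2})$ term in terms of $\Gamma(f^{p-1}, f)$ via a pointwise comparison of edge differences.

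After the substitution $g = f^{p/2}$ the four terms in \eqref{eq:LSI-2} become $\langle g^2 \log g\rangle = \tfrac{p}{2}\langle f^p \log f\rangle$, $\|g\|_2^2 = \|f\|_p^p$, $\|g\|_2^2 \log\|g\|_2 = \tfrac{p}{2}\|f\|_p^p \log\|f\|_p$, and $\langle\Gamma(g)\rangle = \langle\Gamma(f^{p/2})\rangle$. Dividing the resulting inequality by $p/2$ gives
\[
\langle f^p \log f\rangle \leq \frac{2\varepsilon}{p}\langle\Gamma(f^{p/2})\rangle + \frac{2\beta(\varepsilon)}{p}\|f\|_p^p + \|f\|_p^p \log\|f\|_p,
\]
so the task reduces to proving $\tfrac{2}{p}\langle\Gamma(f^{p/2})\rangle \leq \langle\Gamma(f^{p-1}, f)\rangle$. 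Because both $\Gamma$-forms integrate to edge sums of the type $\tfrac{1}{2}\sum_{x\sim y}\omega_{xy}(\cdots)$, this comparison reduces to the pointwise inequality $\tfrac{2}{p}(b^{p/2} - a^{p/2})^2 \leq (b^{p-1} - a^{p-1})(b-a)$ for all $a, b \geq 0$. Assuming $a \leq b$ and writing $b^{p/2} - a^{p/2} = \int_a^b \tfrac{p}{2} t^{p/2-1}\,dt$, Cauchy--Schwarz gives $(b^{p/2} - a^{p/2})^2 \leq \tfrac{p^2}{4(p-1)}(b-a)(b^{p-1} - a^{p-1})$. Since $\tfrac{2}{p}\cdot\tfrac{p^2}{4(p-1)} = \tfrac{p}{2(p-1)} \leq 1$ precisely for $p \geq 2$, the desired pointwise bound follows, which completes the argument.

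The main (if modest) obstacle is the absence of a chain rule on graphs: in the manifold setting the identity $|\nabla f^{p/2}|^2 = \tfrac{p}{2(p-1)}\langle\nabla f^{p-1},\nabla f\rangle$ would make the comparison of gradient forms exact, whereas on a graph one loses this equality and must fall back on a Cauchy--Schwarz estimate applied to discrete differences. The numerical gain afforded by the threshold $p \geq 2$ is exactly what allows the Cauchy--Schwarz loss to be absorbed into the coefficient $\varepsilon$ in front of $\langle\Gamma(f^{p-1}, f)\rangle$; the hypothesis $2 < p < \infty$ in the statement is tailored precisely to this range.
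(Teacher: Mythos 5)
Your proposal is correct and follows essentially the same route as the paper's proof: substitute $f^{p/2}$ into the $p=2$ inequality, then compare $\Gamma(f^{p/2})$ with $\Gamma(f^{p-1},f)$ via the pointwise Cauchy--Schwarz bound $(b^{p/2}-a^{p/2})^2\leq\frac{p^2}{4(p-1)}(b-a)(b^{p-1}-a^{p-1})$ and absorb the constant $\frac{p}{2(p-1)}\leq 1$ for $p\geq 2$. The only difference is that you make the final absorption step explicit, which the paper leaves implicit.
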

\begin{proof}
Putting $f=g^{\frac{p}{2}}$ ($2<p<\infty$) in \eqref{eq:LSI-2}, for all $0\leq g\in C_c(V)$, we obtain
\[\frac{p}{2}\langle g^2\log g\rangle\leq\varepsilon\langle\Gamma(g^{\frac{p}{2}})\rangle+\beta(\varepsilon)\|g\|_p^p+\frac{p}{2}\|g\|_p^p\log\|g\|_p,\]
we observe the following inequality between $\Gamma(g^{\frac{p}{2}})$ and $\Gamma(g^{p-1},g)$ with
$$\Gamma(g^{\frac{p}{2}})\leq\frac{p^2}{4(p-1)}\Gamma(g^{p-1},g),$$
by using Schwartz inequality,
\[(\alpha^{\frac{p}{2}}-\beta^{\frac{p}{2}})^2=\left(\int_\alpha^\beta\frac{p}{2}s^{\frac{p}{2}-1}ds\right)^2
\leq\frac{p^2}{4}(\alpha-\beta)\int_\alpha^\beta s^{p-2}ds
=\frac{p^2}{4(p-1)}(\alpha-\beta)(\alpha^{p-1}-\beta^{p-1}).\]
Then,
\[\frac{p}{2}\langle g^2\log g\rangle\leq\frac{\varepsilon p^2}{4(p-1)}\langle\Gamma(g^{p-1},g)\rangle+\beta(\varepsilon)\|g\|_p^p+\frac{p}{2}\|g\|_p^p\log\|g\|_p,\]
and switching the notation $g$ to $f$, then yield the result.
\end{proof}
The following theorem refers from Davies and Simon \cite{DS}.
\begin{theorem}\label{th:converse-LSI-p}
Let $\varepsilon(p)>0$ and $\delta(p)$ be two continuous functions defined for all $2<p<\infty$ such that
\[\langle f^p\log f\rangle\leq\varepsilon(p)\langle\Gamma(f^{p-1},f)\rangle+\delta(p)\|f\|_p^p+\|f\|_p^p\log\|f\|_p.\]
for any $0\leq f\in C_c(V)$. If
\[t=\int_2^\infty\frac{\varepsilon(p)}{p}dp,~~~~M=\int_2^\infty\frac{\delta(p)}{p}dp\]
are both finite, then
\[\|P_t\|_{2 \rightarrow \infty}\leq e^{M}.\]
\end{theorem}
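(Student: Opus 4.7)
The plan is to adapt the Davies--Simon hypercontractive interpolation argument to the graph setting. Since $|P_t f|\leq P_t|f|$ pointwise, it suffices to prove $\|P_t f\|_\infty\leq e^M\|f\|_2$ for $0\leq f\in C_c(V)$, the general case following by density. I would introduce a strictly increasing $C^1$ bijection $p:[0,t)\to[2,\infty)$ defined implicitly by the ODE $\frac{dp}{ds}=\frac{p(s)}{\varepsilon(p(s))}$ with $p(0)=2$; the hypothesis $t=\int_2^\infty\varepsilon(p)/p\,dp$ is exactly what ensures $p(s)\uparrow\infty$ as $s\uparrow t$. Set $u(s)=P_s f\geq 0$ and $N(s)=\|u(s)\|_{p(s)}$, so that the main object to study is the evolution of $\log N(s)$.

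Using $\partial_s u=\Delta u$ together with the summation-by-parts identity $\langle u^{p-1}\Delta u\rangle=-\langle\Gamma(u^{p-1},u)\rangle$, a direct differentiation gives
\[
\frac{d}{ds}\log N(s)=\frac{1}{p\,N^{p}}\Bigl(p'(s)\langle u^{p}\log u\rangle-p(s)\langle\Gamma(u^{p-1},u)\rangle\Bigr)-\frac{p'(s)}{p(s)^{2}}\log N(s)^{p(s)}.
\]
The key step is to apply the hypothesis to $u(s)$ at exponent $p=p(s)$. The defining ODE $p'(s)\,\varepsilon(p(s))=p(s)$ is chosen precisely so that the $\varepsilon\langle\Gamma\rangle$ term produced by the log-Sobolev inequality cancels the $-p\langle\Gamma\rangle$ term above; and since the hypothesis also contributes $\|u\|_p^p\log\|u\|_p=\tfrac{1}{p}N^{p}\log N^{p}$, the $\log N$ contributions telescope to zero as well. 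What will remain after simplification is the clean inequality
\[
\frac{d}{ds}\log N(s)\leq\frac{p'(s)\,\delta(p(s))}{p(s)}.
\]

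Integrating on $[0,t)$ and changing variables $p=p(s)$, the right-hand side integrates to exactly $M=\int_2^\infty\delta(p)/p\,dp$, giving $\log\|u(s)\|_{p(s)}-\log\|f\|_2\leq M$ for every $s<t$. Since $u(t)=P_t f\in\ell^2\cap\ell^\infty$ (by $\ell^2$-contractivity of $P_t$ and the finite support of $f$), the standard fact that $\|g\|_q\to\|g\|_\infty$ as $q\to\infty$ for $g\in\ell^2\cap\ell^\infty$ will allow me to pass to the limit $s\uparrow t$ and conclude $\|P_t f\|_\infty\leq e^M\|f\|_2$, i.e.\ $\|P_t\|_{2\to\infty}\leq e^M$. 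The main technical obstacle will be rigorously justifying the term-by-term differentiation in $s$ of the weighted sum $\langle u(s)^{p(s)}\rangle$ and of the Abel-summation identity, given that both the summand and the exponent depend on $s$; this should be handled by a dominated-convergence argument exploiting that $f$ has finite support and $P_s$ is simultaneously $\ell^1$- and $\ell^\infty$-contractive, so that $u(s)$ and the relevant powers are uniformly summable on compact subintervals of $[0,t)$.
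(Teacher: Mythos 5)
Your proposal is correct and follows essentially the same route as the paper: the same ODE $p'=p/\varepsilon(p)$ with $p(0)=2$, the same cancellation of the $\Gamma$-term and the $\log N$-term against the hypothesis applied to $u(s)$ at exponent $p(s)$, and the same change of variables turning the integrated right-hand side into $M$. The one point you gloss over is the final limit $s\uparrow t$, where both the function and the exponent vary simultaneously: the paper decouples them by first checking that $t\mapsto\|P_tf\|_p^p$ is nonincreasing for each fixed $p$ (since $\partial_t\|P_tf\|_p^p=-p\langle\Gamma((P_tf)^{p-1},P_tf)\rangle\leq0$), so that $\|P_tf\|_{p(s)}\leq\|P_sf\|_{p(s)}\leq e^{N(s)}\|f\|_2$, and only then sends $p(s)\to\infty$ with the function $P_tf$ held fixed --- you should make this step explicit, since the standard fact $\|g\|_q\to\|g\|_\infty$ applies to a fixed $g$.
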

\begin{proof}
Define the function $p(s)$ for $0\leq s<t$ by,
\begin{equation}\label{eq:p}
\frac{dp}{ds}=\frac{p}{\varepsilon(p)},~~~~p(0)=2,
\end{equation}
so that $p(s)$ is monotonically increasing and $p(s)\rightarrow\infty$ as $s\rightarrow t$. And another function $N(s)$ for $0\leq s<t$ satisfies
\[\frac{dN}{ds}=\frac{\delta(p)}{\varepsilon(p)},~~~~N(0)=0,\]
so that $N(s)\rightarrow M$ as $s\rightarrow t$.
We consider the functional $\log\left(e^{-N(s)}\|P_s f\|_{p(s)}\right)$, for any $0<s<t$ and any $0\leq f\in C_c(V)$. We obtain
\[\begin{split}
\frac{d}{ds}&\log\left(e^{-N(s)}\|P_s f\|_{p(s)}\right)
=\frac{d}{ds}\left(-N(s)+\frac{1}{p(s)}\log\|P_s f\|_{p(s)}^{p(s)}\right)\\
&=\frac{\delta(p)}{\varepsilon(p)}-\frac{1}{p^2}\frac{p}{\varepsilon(p)}\log\|P_s f\|_{p}^{p}+\frac{1}{p\|P_s f\|_{p}^{p}}\left(\frac{p}{\varepsilon(p)}\left\langle(P_sf)^{p}\log P_sf\right\rangle-p\langle\Gamma ((P_sf)^{p-1},P_sf)\rangle\right)\\
&=\frac{1}{\varepsilon(p)\|P_s f\|_{p}^{p}}\left(\left\langle(P_sf)^{p}\log P_sf\right\rangle-\varepsilon(p)\langle\Gamma ((P_sf)^{p-1},P_sf)\rangle-\delta(p)\|P_s f\|_{p}^{p}-\|P_s f\|_{p}^{p}\log \|P_s f\|_{p}\right)\\
&\leq 0.
\end{split}\]
So, for all $0\leq s<t$,
\[e^{-N(s)}\|P_s f\|_{p(s)}\leq \|f\|_2.\]
We can derive $\|P_t f\|_p^p$ be a decreasing function with respect to $t$, as follows
\[\partial_t\|P_t f\|_p^p=\langle p(P_t f)^{p-1}\Delta P_t f\rangle=-p\langle\Gamma((P_t f)^{p-1},P_t f)\rangle\leq-p\cdot\frac{4(p-1)}{p^2}\langle\Gamma((P_t f)^{\frac{p}{2}})\rangle\leq 0,\]
therefore combining the above two inequalities, we have for all $0\leq s<t$
\[\|P_t f\|_{p(s)}\leq \|P_s f\|_{p(s)}\leq e^{N(s)}\|f\|_2,\]
let $s\rightarrow t$, then
\[\|P_t f\|_\infty\leq e^{M}\|f\|_2.\]
If $0\leq f\in \ell^2$, there exists a sequence of $0\leq f_n\in C_c(V)$ such that $\|f_n-f\|_2\rightarrow0$ and let $f_n(x)\leq f(x)$ for any $x\in V$ (such as $f_n(x)=f(x)$ when $f(x)\leq n$, equal to $0$ otherwise for all $n\in \mathbb{N^+}$). Since $\|P_tf_n-P_tf\|_2\rightarrow0$ and from the above calculation, we have
\[\|P_t f_n\|_\infty\leq e^{M}\|f_n\|_2.\]
Therefore
\[\|P_t f\|_\infty\leq e^{M}\|f\|_2.\]
For a general $f\in \ell^2$ we know $|P_t f|\leq P_t|f|$ by the positivity of $P_t$, so
\[\|P_t f\|_\infty\leq \|P_t |f|\|_\infty\leq e^{M}\|f\|_2.\]
That completes what we desire.
\end{proof}

In the above Theorem, we can choose
\[\varepsilon(p)=\frac{2t}{p},~~~~\delta(p)=\frac{2\beta(\varepsilon(p))}{p},\]
then the solution of \eqref{eq:p} is
\[p(s)=\frac{2t}{t-s},\]
and
\[M=\int_2^\infty\frac{\delta(p)}{p}dp=\int_2^\infty\frac{2\beta(\varepsilon(p))}{p^2}dp=\frac{1}{t}\int_0^t\beta(\varepsilon)d\varepsilon=M(t).\]
Therefore, combining Lemma~\ref{lem:LSI-p} with Theorem~\ref{th:converse-LSI-p}, we can obtain the following result.
\begin{corollary}\label{co:ls}
Let $\beta(\varepsilon)$ be a monotonically decreasing continuous function of $\varepsilon$ such that for all $\varepsilon>0$ and $0\leq f \in C_c(V)$,
\[\langle f^2\log f\rangle\leq\varepsilon\langle\Gamma(f)\rangle+\beta(\varepsilon)\|f\|_2^2+\|f\|_2^2\log\|f\|_2.\]
If
\[M(t)=\frac{1}{t}\int_0^t\beta(\varepsilon)d\varepsilon\]
is finite for all $t>0$. Then $P_t$ is ultracontractive and for all $0<t<\infty$
\[\|P_t\|_{2 \rightarrow \infty}\leq e^{M(t)}.\]
\end{corollary}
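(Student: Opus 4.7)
The plan is to combine Lemma~\ref{lem:LSI-p} with Theorem~\ref{th:converse-LSI-p} by making an explicit choice of the two auxiliary functions $\varepsilon(p)$ and $\delta(p)$ that appear in the hypothesis of the latter. Starting from the log-Sobolev inequality for $f\in C_c(V)$, Lemma~\ref{lem:LSI-p} upgrades it, for each $p>2$ and each $\varepsilon>0$, to
\[
\langle f^p\log f\rangle\leq \varepsilon\,\langle\Gamma(f^{p-1},f)\rangle + \frac{2\beta(\varepsilon)}{p}\|f\|_p^p + \|f\|_p^p\log\|f\|_p.
\]
Because $\varepsilon$ is free, I would choose it to depend on $p$; setting $\varepsilon(p)=2t/p$ (for a fixed $t>0$) and $\delta(p)=2\beta(\varepsilon(p))/p$ keeps the inequality in the exact form demanded by Theorem~\ref{th:converse-LSI-p}.

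Next I would verify that these choices make the two controlling integrals finite and identify them with $t$ and $M(t)$, respectively. The differential equation \eqref{eq:p} becomes $dp/ds = p^2/(2t)$ with $p(0)=2$, which integrates to $p(s)=2t/(t-s)$, so $p(s)\to\infty$ exactly as $s\to t$, confirming the correct upper limit. For the first integral,
\[
\int_2^\infty \frac{\varepsilon(p)}{p}\,dp = \int_2^\infty \frac{2t}{p^2}\,dp = t,
\]
as required. For the second, the substitution $\varepsilon=2t/p$, giving $d\varepsilon = -2t\,dp/p^2$, converts the tail integral in $p$ into an integral in $\varepsilon$ over $[0,t]$:
\[
M = \int_2^\infty \frac{\delta(p)}{p}\,dp = \int_2^\infty \frac{2\beta(\varepsilon(p))}{p^2}\,dp = \frac{1}{t}\int_0^t \beta(\varepsilon)\,d\varepsilon = M(t),
\]
which is finite by hypothesis.

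With these identifications in place, Theorem~\ref{th:converse-LSI-p} immediately yields $\|P_t\|_{2\to\infty}\leq e^{M(t)}$ for every $t>0$, which is the conclusion. The main (and essentially only) delicate step is the change of variables $\varepsilon=2t/p$, together with checking that the extension argument at the end of the proof of Theorem~\ref{th:converse-LSI-p} from $C_c(V)$ to $\ell^2$ goes through for our chosen $\varepsilon(p)$ and $\delta(p)$; everything else reduces to elementary calculus and quotation of the two earlier results.
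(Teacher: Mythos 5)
Your proposal is correct and follows exactly the same route as the paper: the paper also sets $\varepsilon(p)=2t/p$ and $\delta(p)=2\beta(\varepsilon(p))/p$, solves \eqref{eq:p} to get $p(s)=2t/(t-s)$, and identifies $M=\frac{1}{t}\int_0^t\beta(\varepsilon)\,d\varepsilon=M(t)$ before invoking Lemma~\ref{lem:LSI-p} and Theorem~\ref{th:converse-LSI-p}. Your additional check that $\int_2^\infty\varepsilon(p)/p\,dp=t$ is a welcome (if routine) verification that the paper leaves implicit.
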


Now, we give a example of the relationship between the bounds of $\|P_t\|_{2 \rightarrow \infty}$ and the efficiency of Log-Solobev inequality using Theorem \ref{th:lsi} and Corollary \ref{co:ls}.
If there exists constants $c_1>0$ and $N>0$ such that for all $t>0$,
\[e^{M(t)}\leq c_1 t^{-\frac{N}{4}},\]
then there exists a constant $c_2>0$ such that for all $\varepsilon>0$,
\[\beta(\varepsilon)\leq c_2-\frac{N}{4}\log \varepsilon.\]
Conversely, the above inequality implies that there exists a constant $c_3>0$ such that for all $t>0$,
\[e^{M(t)}\leq c_3 t^{-\frac{N}{4}}.\]
From the relationship between the upper bound of $p(t,x,y)$ and $\|P_t\|_{2 \rightarrow \infty}$ we mentioned before, we have the following conclusion.
\begin{theorem}\label{th:lsth}
For some constant $C>0$ and $N>0$ such that for all $t>0$,
\[\sup_{x,y\in V} p(t,x,y)\leq Ct^{-\frac{N}{2}}\]
is equivalent to the following Log-Solobev inequality, for some constant $C'>0$ and for all $\varepsilon>0$,
\[\langle f^2\log f\rangle\leq\varepsilon\langle\Gamma(f)\rangle+\left(C'-\frac{N}{4}\log \varepsilon\right)\|f\|_2^2+\|f\|_2^2\log\|f\|_2.\]
\end{theorem}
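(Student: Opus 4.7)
The plan is to assemble the theorem from Theorem~\ref{th:lsi}, Corollary~\ref{co:ls}, and the operator-norm identities recorded at the start of Section~3. The key translation step is to pass between the heat kernel bound (exponent $N/2$) and the ultracontractive bound on $\|P_t\|_{2\to\infty}$ (exponent $N/4$) via
\[\|P_t\|_{1\to\infty}=\sup_{x,y\in V}p(t,x,y)\qquad\text{and}\qquad\|P_{t/2}\|_{2\to\infty}^2=\|P_t\|_{1\to\infty}.\]
Thus the hypothesis $\sup_{x,y}p(t,x,y)\leq Ct^{-N/2}$ is equivalent (after adjusting constants and rescaling $t$ by a factor of $2$) to an estimate of the form $\|P_t\|_{2\to\infty}\leq c_1 t^{-N/4}$, i.e.\ to $M(t):=\log\|P_t\|_{2\to\infty}\leq c_0-\tfrac{N}{4}\log t$, where $M$ is continuous and decreasing.

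For the forward direction, feeding the bound $M(t)\leq c_0-\tfrac{N}{4}\log t$ into Theorem~\ref{th:lsi} yields the Log-Sobolev inequality
\[\langle f^2\log f\rangle\leq\varepsilon\langle\Gamma(f)\rangle+\beta(\varepsilon)\|f\|_2^2+\|f\|_2^2\log\|f\|_2\]
with $\beta(\varepsilon)=M(\varepsilon)\leq c_0-\tfrac{N}{4}\log\varepsilon$, which is exactly the stated form upon setting $C'=c_0$.

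For the converse, starting from the Log-Sobolev inequality with $\beta(\varepsilon)=C'-\tfrac{N}{4}\log\varepsilon$, I invoke Corollary~\ref{co:ls}. Using $\int_0^t\log\varepsilon\,d\varepsilon=t\log t-t$, the averaged quantity evaluates explicitly to
\[M(t)=\frac{1}{t}\int_0^t\!\Bigl(C'-\tfrac{N}{4}\log\varepsilon\Bigr)d\varepsilon=C'+\tfrac{N}{4}-\tfrac{N}{4}\log t,\]
which is finite for every $t>0$, so Corollary~\ref{co:ls} gives $\|P_t\|_{2\to\infty}\leq e^{M(t)}\leq e^{C'+N/4}\,t^{-N/4}$. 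Squaring via the identity $\|P_t\|_{1\to\infty}=\|P_{t/2}\|_{2\to\infty}^2$ and absorbing the factor $2^{N/2}$ into the constant produces $\sup_{x,y}p(t,x,y)\leq C t^{-N/2}$, as desired.

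No real obstacle remains — the argument is a bookkeeping combination of the two earlier results, and the only point demanding care is getting the exponents and constants straight through the duality step $\|P_{t/2}\|_{2\to\infty}^2=\|P_t\|_{1\to\infty}$, which converts a rate $t^{-N/4}$ on the $\ell^2\to\ell^\infty$ norm into a rate $t^{-N/2}$ on the heat kernel (and correspondingly the hypothesis $\beta(\varepsilon)\leq C'-\tfrac{N}{4}\log\varepsilon$ in the Log-Sobolev inequality pairs with $t^{-N/2}$ on the heat kernel side).
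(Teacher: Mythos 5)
Your proposal is correct and follows essentially the same route as the paper: the forward direction via Theorem~\ref{th:lsi} with $\beta(\varepsilon)=M(\varepsilon)\leq c_0-\frac{N}{4}\log\varepsilon$, the converse via Corollary~\ref{co:ls} with the explicit computation $M(t)=C'+\frac{N}{4}-\frac{N}{4}\log t$, and the duality identities $\|P_t\|_{1\to\infty}=\sup_{x,y}p(t,x,y)=\|P_{t/2}\|_{2\to\infty}^2$ to convert between the exponents $N/4$ and $N/2$. The bookkeeping of constants and exponents matches what the paper does in the paragraph preceding the theorem.
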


\section{Nash inequalities and ultracontractivity on graphs}
In this section we study the other important inequalities-Nash type inequalities and the above ultracontractive bounds estimate. This result goes back to Nash \cite{N} and further studied by Fabes and Stroock \cite{FS} on smoothing setting.
\begin{theorem}\label{th:nash}
Let $\mu>0$, the following two bounds are equivalent:
\begin{description}
  \item[(1)] Ultracontractivity property: for some constant $c_1>0$ and all $t>0$, $f\in\ell^2$,
\[\|P_t f\|_\infty\leq c_1 t^{-\frac{\mu}{4}}\|f\|_2.\]
  \item[(2)] Nash inequalities: for some constant $c_2>0$ and all $0\leq f\in C_c(V)$,
\[\|f\|_2^{2+\frac{4}{\mu}}\leq c_2\langle \Gamma(f)\rangle\|f\|_1^{\frac{4}{\mu}}.\]
\end{description}
\end{theorem}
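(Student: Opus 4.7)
The plan is to prove both implications by the classical Nash differentiation argument and its converse, adapted to the graph setting. Throughout I use the duality identity recorded at the start of Section~3, namely $\|P_t\|_{2\to\infty}=\|P_t\|_{1\to 2}$ (a consequence of self-adjointness of $P_t$), so that the ultracontractive bound (1) is equivalent to the dual bound $\|P_tf\|_2\leq c_1 t^{-\mu/4}\|f\|_1$ for all $f\in\ell^1\cap\ell^2$.

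For the direction (2)$\Rightarrow$(1), fix $0\leq f\in C_c(V)$ with $\|f\|_1=1$ and set $u(t):=\|P_tf\|_2^2$. Differentiating under the sum and using self-adjointness of $\Delta$ together with the Dirichlet identity $-\langle f,\Delta f\rangle=\langle\Gamma(f)\rangle$ gives $u'(t)=-2\langle\Gamma(P_tf)\rangle$. Because $P_t$ is positivity-preserving and $\ell^1$-contractive, $\|P_tf\|_1\leq\|f\|_1=1$, so the Nash inequality applied to $P_tf$ reads $u(t)^{1+2/\mu}\leq -\tfrac{c_2}{2}u'(t)$. This ODE inequality integrates (the initial term has the favorable sign and can be dropped) to $u(t)\leq C_\mu\,t^{-\mu/2}$, i.e.\ $\|P_tf\|_2\leq C\,t^{-\mu/4}\|f\|_1$. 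For signed $f\in\ell^1$ one uses $|P_tf|\leq P_t|f|$, and duality then yields (1).

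For the reverse direction (1)$\Rightarrow$(2), the starting point is the decomposition
\[
\|f\|_2^2 \;=\; \langle (I-P_t)f,f\rangle \;+\; \langle P_tf,f\rangle.
\]
Spectral calculus applied to the nonnegative self-adjoint operator $-\Delta$ (via $1-e^{-\lambda t}\leq\lambda t$) gives $\langle (I-P_t)f,f\rangle\leq t\langle -\Delta f,f\rangle=t\langle\Gamma(f)\rangle$, while Cauchy-Schwarz together with the dual form $\|P_tf\|_2\leq c_1 t^{-\mu/4}\|f\|_1$ of (1) gives $\langle P_tf,f\rangle\leq c_1 t^{-\mu/4}\|f\|_1\|f\|_2$. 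Hence
\[
\|f\|_2^2 \;\leq\; t\langle\Gamma(f)\rangle \;+\; c_1\,t^{-\mu/4}\|f\|_1\|f\|_2 \qquad\text{for all } t>0.
\]
Optimizing the right-hand side in $t$ (critical value $t\asymp(\|f\|_1\|f\|_2/\langle\Gamma(f)\rangle)^{4/(\mu+4)}$) produces the estimate $\|f\|_2^2\leq C\langle\Gamma(f)\rangle^{\mu/(\mu+4)}(\|f\|_1\|f\|_2)^{4/(\mu+4)}$, which after raising to the $(\mu+4)/\mu$ power and cancelling $\|f\|_2^{4/\mu}$ is exactly the Nash inequality $\|f\|_2^{2+4/\mu}\leq c_2\langle\Gamma(f)\rangle\|f\|_1^{4/\mu}$.

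The main technical points are bookkeeping rather than deep: one must justify term-by-term differentiation of $u(t)$ (clean for $f\in C_c(V)$, where $P_tf$ is bounded with finite Dirichlet energy), the $\ell^1$-contractivity of $P_t$ (which follows by duality from $P_t\mathbf{1}\leq\mathbf{1}$ and self-adjointness w.r.t.\ $m$, plus interpolation), and the spectral bound $\langle(I-P_t)f,f\rangle\leq t\langle\Gamma(f)\rangle$ (which needs only that $-\Delta$ is nonnegative self-adjoint on $\ell^2$). Once these are in place, the two halves reduce to the ODE integration of paragraph two and the elementary optimization of paragraph three, which is really the crux of the argument and the only place where the exponent $\mu$ must be tracked carefully.
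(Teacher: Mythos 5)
Your proposal is correct and follows essentially the same route as the paper: the direction (2)$\Rightarrow$(1) is the identical Nash ODE argument (differentiate $\|P_tf\|_2^2$, use $\|P_tf\|_1\leq\|f\|_1$, integrate $\frac{d}{dt}\|P_tf\|_2^{-4/\mu}$), and the direction (1)$\Rightarrow$(2) rests on the same two-term decomposition $\|f\|_2^2=\langle(I-P_t)f,f\rangle+\langle P_tf,f\rangle$ followed by optimization in $t$. The only cosmetic differences are that the paper proves $\langle(I-P_{2t})f,f\rangle\leq 2t\langle\Gamma(f)\rangle$ via the integral identity $\langle f,f\rangle-\langle P_sf,f\rangle=\int_0^s\langle\Gamma(P_{u/2}f)\rangle\,du$ and monotonicity of the Dirichlet energy rather than your spectral bound $1-e^{-\lambda t}\leq\lambda t$, and it bounds $\langle P_{2t}f,f\rangle=\|P_tf\|_2^2\leq c_1^2t^{-\mu/2}\|f\|_1^2$ directly instead of using Cauchy--Schwarz, so its optimization avoids the final cancellation of $\|f\|_2^{4/\mu}$.
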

\begin{proof}
First we introduce a similar equality from [BHLLMY] we will use later. For any $f\in C_c(V)$, and all $s>0$, from the facts that $P_t$ is self-adjoint, $P_t$ commutes with $\Delta$, and the semigroup property of $P_t$ (that is, $P_{\frac{t}{2}}\circ P_{\frac{t}{2}}=P_t$), we obtain
\[\begin{split}
\langle f,f\rangle
&-\langle P_s f,f\rangle=\langle f-P_s f,f\rangle=\sum_{x\in V}\mu(x)f(x)(P_0 f-P_s f)(x)\\
&=-\int_0^s\sum_{x\in V}\mu(x)f(x)\partial_tP_tf(x)dt=-\int_0^s\sum_{x\in V}\mu(x)f(x)\Delta P_t f(x)dt\\
&=-\int_0^s\sum_{x\in V}\mu(x)P_{\frac{t}{2}}f(x)\Delta P_{\frac{t}{2}} f(x)dt=\int_0^s\langle\Gamma(P_{\frac{t}{2}} f)\rangle dt.
\end{split}\]
Given (1) we have $\|P_t f\|_2\leq c_1 t^{-\frac{\mu}{4}}\|f\|_1$ by duality, then for all $f\in C_c(V)$,
\[\begin{split}
c_1^2 t^{-\frac{\mu}{2}}\|f\|_1^2
&\geq \|P_t f\|_2^2= \langle P_{2t} f,f\rangle\\
&=\langle f,f\rangle-\int_0^{2t}\langle\Gamma(P_{\frac{s}{2}} f)\rangle ds\\
&\geq\langle f,f\rangle-2t\langle\Gamma(f)\rangle,
\end{split}
\]
in the last step, we use that the function $\langle\Gamma(P_t f)\rangle$ is decreasing with respect to $t$, for any $t>0$, by
\[\begin{split}
\frac{d}{dt}\langle\Gamma(P_t f)\rangle
&=\frac{d}{dt}\left(\frac{1}{2}\sum_{x\in V}\sum_{y\sim x}\omega_{xy}(P_t f(y)-P_t f(x))^2\right)\\
&=\sum_{x\in V}\sum_{y\sim x}\omega_{xy}(P_t f(y)-P_t f(x))(\Delta P_t f(y)-\Delta P_t f(x))\\
&=2\langle\Gamma(P_t f,\Delta P_t f)\rangle=-2\langle\Delta P_t f,\Delta P_t f\rangle\leq0.
\end{split}\]
Therefore
\[\|f\|_2^2\leq2t\langle\Gamma(f)\rangle+c_1^2 t^{-\frac{\mu}{2}}\|f\|_1^2,\]
and (2) follows by putting
\[t=\langle\Gamma(f)\rangle^{-\frac{2}{\mu+2}}\|f\|_1^{\frac{4}{\mu+2}}.\]
Conversely given (2), for all $0\leq f\in C_c(V)$, we know the measure is invariant with $\|f\|_1=\|P_tf\|_1$ because of $\frac{d}{dt}\|P_tf\|_1=\sum_{x\in V}m(x)\Delta P_tf(x)=0$ from the definition of $\Delta$, so we have
\[-\frac{d}{dt}\|P_t f\|_2^2=\langle \Gamma(P_t f)\rangle\geq\frac{\|P_tf\|_2^{2+\frac{4}{\mu}}}{c_2\|P_tf\|_1^{\frac{4}{\mu}}}=\frac{\|P_tf\|_2^{2+\frac{4}{\mu}}}{c_2\|f\|_1^{\frac{4}{\mu}}}.\]
Therefore
\[-\frac{d}{dt}(\|P_t f\|_2^{-\frac{4}{\mu}})\geq\frac{2}{c_2\mu\|f\|_1^{\frac{4}{\mu}}},\]
and integrating the above inequality from $0$ to $t$, we obtain
\[\|P_t f\|_2^{-\frac{4}{\mu}}\geq\|P_t f\|_2^{-\frac{4}{\mu}}-\|f\|^{-\frac{2}{\mu}}\geq\frac{2t}{c_2\mu\|f\|_1^{\frac{4}{\mu}}}.\]
So
\[\|P_t f\|_2\leq\left(\frac{c_2\mu}{2t}\right)^{\frac{\mu}{4}}\|f\|_1=c_1t^{-\frac{\mu}{4}}\|f\|_1.\]
Finally, (1) follows by duality.

As before in proof of Theorem~\ref{th:converse-LSI-p}, in general $f\in \ell^2$, we have the same conclusion.
\end{proof}

\textit{The proof of Theorem \ref{th:equ}.}
From Theorem \ref{th:lsth} [$(CUE)\Leftrightarrow(LS)$ with $\beta(\varepsilon)=c-\frac{D}{4}\log \varepsilon$], Theorem~\ref{th:nash} [$(CUE)\Leftrightarrow(N)$], and $(S)\Leftrightarrow(N)\Leftrightarrow(FK)\Leftrightarrow(DUE)$ we illustrate before from summarising those authors' results,  we obtain what we desire.

\section{Nonnegative curvature}
In this section, we derive the above inequalities and upper estimate of the heat kernel on nonnegative curvature graphs.

From~\cite{CG}(see Theorem 5.4), for all locally finite graphs and two combining properties of the discrete-time on-diagonal upper estimate, that is
\begin{equation}\label{eq:onupper}
p_{k}(x,y)\leq \frac{cm(y)}{V(x,\sqrt{k})}
\end{equation}
in conjunction with the doubling property $DV(C)$, i.e.
\begin{equation*}
V(x,2r)\leq CV(x,r)
\end{equation*}
implied the relative Faber-Krahn inequality $(FK)^*$ with  $\nu=\frac{2}{\log_2 C}$.

Moreover, in the paper~\cite{HLLY} it proved \eqref{eq:onupper}(see Proposition 6.2) and $DV(C)$ with $C=C(n)$ (see Theorem 4.1) are both true if the graph satisfies  $CDE'(n,0)$ and $\Delta(\alpha)$. Therefore under the assumption with $CDE'(n,0)$ and $\Delta(\alpha)$, $(FK)^*$ holds with constant $\nu=\nu(n)=\frac{2}{\log_2 C(n)}$ ~( see\cite{CG}). Furthermore, note that if $(FK)^*$ in conjunction with the volume lower bound $(V)$ holds with $D=\frac{2}{\nu}=\log_2 C(n)$ , then one obtains the Faber-Krahn inequality $(FK)$~( see\cite{C}). For example, for any lattice $Z^d$ (satisfies $CDE'(4.53d,0)$), since $V(x,r)\simeq r^d$, then $(DV)$ holds for $C(n)=2^d$, and $(V)$ satisfied with $D=d$, which equal to the number $\log_2 C(n)$. So we have the following result.
\begin{theorem}\label{th:cde}
Assume a graph $G$ satisfies $CDE'(n,0)$, $\Delta(\alpha)$ and $(V)$ with $D=D(n)$, then the graph satisfies the Faber-Krahn inequality $(FK)$.
\end{theorem}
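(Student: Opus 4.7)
The plan is to chain three ingredients together: results from \cite{HLLY} that extract both a volume doubling property and an on-diagonal discrete-time heat kernel upper estimate out of the curvature hypothesis; a general theorem of \cite{CG} that turns these two estimates into the relative Faber-Krahn inequality $(FK)^*$; and finally a direct cancellation of the $r$-dependence in $(FK)^*$ against the polynomial lower volume bound $(V)$ to recover $(FK)$.

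First I would invoke the two consequences of $CDE'(n,0)$ together with $\Delta(\alpha)$ proved in \cite{HLLY}: the on-diagonal upper bound $p_k(x,y)\leq c\,m(y)/V(x,\sqrt{k})$ (their Proposition~6.2) and the doubling inequality $V(x,2r)\leq C(n)V(x,r)$ (their Theorem~4.1) with constant $C(n)$ depending only on $n$. Feeding both into Theorem~5.4 of \cite{CG} then immediately yields $(FK)^*$ with exponent $\nu = 2/\log_2 C(n)$. This step is purely a quotation and involves no further analysis.

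The remaining work is the passage from $(FK)^*$ to $(FK)$. Given a finite set $\Omega\subset V$, pick any $x\in \Omega$ and take $r$ to be the smallest radius with $\Omega\subset B(x,r)$, so $r\geq \frac{1}{2}$. Applying $(FK)^*$ and then $(V)$ gives
\[
\lambda_1(\Omega)\;\geq\;\frac{c}{r^2}\left(\frac{V(x,r)}{V(\Omega)}\right)^{\nu}\;\geq\;\frac{c'\,r^{D\nu}}{r^{2}\,V(\Omega)^{\nu}}.
\]
The statement builds in the matching $D = D(n) = \log_2 C(n) = 2/\nu$, so that $D\nu - 2 = 0$ and the factor $r^{D\nu-2}$ is identically $1$; what remains is precisely $\lambda_1(\Omega)\geq c''\,V(\Omega)^{-2/D}$, which is $(FK)$.

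There is no serious obstacle: the heavy lifting — the curvature-to-doubling and curvature-to-heat-kernel implications in \cite{HLLY}, and the doubling-plus-heat-kernel-to-Faber-Krahn principle in \cite{CG} — is all imported. The one point requiring care is the matching of exponents: the polynomial growth exponent $D$ in $(V)$ must equal the value $2/\nu$ produced by the doubling constant, since otherwise the cancellation $r^{D\nu-2}=1$ fails and only a weaker inequality with residual $r$ survives. Once this compatibility (which the authors already embed into the hypothesis and illustrate with $\mathbb{Z}^d$) is in place, the conclusion follows by a one-line substitution.
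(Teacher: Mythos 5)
Your proposal is correct and follows exactly the paper's route: quote \cite{HLLY} for the on-diagonal bound and volume doubling under $CDE'(n,0)+\Delta(\alpha)$, quote Theorem~5.4 of \cite{CG} to get $(FK)^*$ with $\nu=2/\log_2 C(n)$, and then cancel the $r$-dependence against $(V)$ with $D=2/\nu$. The only difference is that you write out the final step $(FK)^*+(V)\Rightarrow(FK)$ explicitly (modulo the trivial adjustment $r=\max(1/2,\,\mathrm{rad}\,\Omega)$ for singletons), where the paper simply cites \cite{C}.
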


Combining Theorem \ref{th:equ} (the equivalence between functional inequalities and heat kernel estimate) with Theorem~\ref{th:cde} [$CDE'(n,0)+\Delta(\alpha)+(V)\Rightarrow(FK)$], so we prove the Theorem~\ref{th:final}.
\bibliographystyle{amsalpha}

Yong Lin,\\
Department of Mathematics, Renmin University of China, Beijing, China\\
\textsf{linyong01@ruc.edu.cn}\\
Shuang Liu,\\
Department of Mathematics, Renmin University of China, Beijing, China\\
\textsf{cherrybu@ruc.edu.cn}\\
Hongye Song,\\
Beijing International Studies University, Beijing, China\\
\textsf{songhongye@bisu.edu.cn}
\end{document}